\newtheorem{thm}{Theorem}
\newtheorem{prop}{Proposition}[section]
\newtheorem{lem}{Lemma}[section]
\newtheorem{dfn}{Definition}[section]
\newtheorem{remark}{Remark}
\newtheorem*{cdn}{Condition}
\newtheorem{asm}{Assumption}
\def\br{B^\Omega_r(x)}
\def\bxr{B^X_r(x,s)}
\newcommand{\diam}{\mbox{diam }}
\newcommand{\supp}{\mathrm{supp}}
\newcommand{\ncy}{\xi_{i_0\ldots i_{n-1}}}
\newcommand{\ncys}{\tilde{\xi}_{i_0\ldots i_{n-1}}}
\providecommand{\abs}[1]{\lvert \, #1 \, \rvert}
\newcommand\numberthis{\addtocounter{equation}{1}\tag{\theequation}}
\def\phi{\varphi}
\def\le{\leqslant}
\def\ge{\geqslant}
\def\diam{\text{diam}}
\def\supp{\text{supp}}
\title{Hitting Times Distribution And Extreme Value Law for Flows}
 \date{\today}
\begin{document}

\maketitle
\authors{Maria Jos\'{e} Pacifico\footnote{Partially supported by CNPq, FAPERJ},
Fan Yang\footnote{Partially supported by CAPES.}}
\begin{abstract}
For flows whose return map on a cross section has sufficient mixing property, we show that the hitting time distribution of the flow to balls is exponential in limit. We also establish a link between the extreme value distribution of the flow and its hitting time distribution, generalizing a previous work by Freitas et al in the discrete time case. Finally we show that for maps that can be modeled by Young`s tower with polynomial tail, the extreme value law holds. 

\end{abstract}

\tableofcontents

%%%%%%%%%%%%%%%%%%%%%%%%%%%%%%%%%%%%%%%%%%%%%%%%%%%%%%%%%
%%%%%%%%%%%%%%%%%%%%%%%%%%%%%%%%%%%%%%%%%%%%%%%%%%%%%%%%% introduction
\section{Introduction}
Hitting and return time statistics for discrete time systems have been studied extensively in the past few decades. The famous Poincar\'e recurrence theorem guarantees that the return time is almost surely finite. More recently, the limiting distribution of the hitting and return time has been studied in various situations. The first result is due to Pitskel~\cite{Pit} for Markov chains and  Hirata~\cite{H93} for Axiom A diffeomorphisms. Later \cite{HSV} provided a frame work to show that the first hitting time distribution is exponential, %provided that for systems satisfying certain mixing properties and the short return set can be controlled. 
for systems with the short return set controlled and satisfying certain mixing properties. 
Higher order return times have also been studied. In~\cite{CC}, \cite{HW15}  and~\cite{PS} it is shown that for systems that can be modeled by Young's tower, the higher order return times converge in distribution to Poisson law.  

Nowadays %\todo{look the replacement here} 
the study of extreme value distribution in dynamical systems has became in a very active line of research.
%\marginpar{Replace by: Nowadays, the study....active. }The study of extreme value distribution in dynamical systems has also been very active recently. 
The recent published book~\cite{LFF} provides an excellent introduction to this field. Notably Freitas et al.~\cite{FFT10} and~\cite{FFT11} showed a relation between extreme value law and hitting time distribution for proper observations.

However, not much is known about the return time statistics for flows. Rousseau~\cite{R12} established a relation between the recurrence rate and the pointwise dimension of the invariant measure. The same result is proven for both expanding and contracting Lorenz attractors in~\cite{galapacif09} and~\cite{GNP}. 

There is even fewer results on the extreme value law for flows, possible due to the lack of corresponding theory in probability theory itself. Holland et al.~\cite{HNT} established a link between the extreme value theory of a measure preserving map and its suspension flow under a integrable roof function. 

In this work we study the hitting time distribution and extreme value law for the suspension flow over a measure preserving map with a integrable roof function. In Section~\ref{hitting flow} we show that the hitting time distribution of the map being exponential implies the same law for the suspension flow. In Section~\ref{evl flow} we study the relation between the hitting time distribution for the flow and the extreme value law, generalizing the work by Freitas et al.~\cite{FFT11} in the discrete time case. Finally we give a direct proof of the extreme value law for systems with Young towers whose tail is polynomial. 

After uploading this preprint on arXiv, we were made aware that Marklof in~\cite{M16} showed a result similar to Theorem 1. He considered target sets on cross sections instead of balls and showed that the Poisson distribution for the hitting time of the discrete time systems implies the same law for the semi-flow. 

%%%%%%%%%%%%%%%%%%%%%%%%%%%%%%%%%%%%%%%%%%%%%%%%%%%%%%%%%
%%%%%%%%%%%%%%%%%%%%%%%%%%%%%%%%%%%%%%%%%%%%%%%%%%%%%%%%% Theorems
\section{Definitions and Main Theorems}

Let $R$ be a measurable map on the metric space $\Omega$ with metric $d_\Omega$. Let $\mu_\Omega$ be a probability measure which is invariant under $R$. We consider the suspension flow $X_t$ on the space $X = \Omega \times [0,\infty)/\sim$ with roof function $r: \Omega \to \mathbb{R}^+$where the equivalent relation $\sim$ is given by 
$$
\left(x,r(x)\right) \sim \left(R(x),0\right).
$$
On the space $X$ we use the box metric, i.e.\ $d_X((x,t_1), (y,t_2)) = \max\{d_\Omega(x,y), \abs{t_2 - t_1} \} $. In this paper we will always assume that the roof function is integrable, i.e.\ $\int r\,d\mu_\Omega < \infty$. This allows us to define the flow-invariant probability measure $\mu_X$ for the suspension flow $X_t$ as follow:
$$
d\mu_X:= \frac{d\mu_\Omega \times dt}{\int r\,\mu_\Omega}
$$

%%%%%%%%%%%%%%%%%%%%%%%%%%%%%%%%%%%%%%%%%%%%%%%%%%%%%%%%% Hitting times
\subsection{Hitting times distribution for suspension flows}\label{hitting flow}

Given $A \subset \Omega$ we define the first hitting time and higher order hitting times for the map $R$ as usual:
$$
 \tau^R_A(x) = \tau^{1,R}_A(x) = \min\{k>0: R^k(x) \in A\}
$$
and for $m>0$,
$$
\tau^{m+1,R}_A(x) = \min\{k>\tau^{m,R}_A(x): R^k(x) \in A\}.
$$
By the famous Poincar\'e Recurrence Theorem, $\tau^R_A$ is almost surely finite on $A$. Furthermore by the Kac's Lemma, we have
$$
\int_A \tau^R_A\, d\mu_\Omega = 1 
$$
provided that the measure $\mu_\Omega$ is ergodic.

We say that the first hitting time distribution of $R$ is exponential, if 
$$
\mu_\Omega\left(\tau^R_{B^\Omega_r(x)} > \frac{t}{\mu_\Omega(B^\Omega_r(x))} \right) \to e^{-t}
$$
as $r\to 0$. Here $B^\Omega_r(x)$ is the ball at $x$ with radius $r$.

We say that the higher order hitting times distribution is Poisson, if
$$
\mu_\Omega\left(\tau^{m,R}_{B^\Omega_r(x)} \le \frac{t}{\mu_\Omega(B^\Omega_r(x))} <  \tau^{m+1,R}_{B^\Omega_r(x)} \right) \to e^{-t} \frac{t^m}{m!}.
$$

Next we define the hitting times for the flow $X_t$.

\begin{dfn}\label{dfn_hitting}
Let $X_t$ be a flow on some space $M$. Given $A\subset M$ we define the  exit time $E_A(x) = \inf\{t\ge 0: X_t(x) \notin A\}$. 
The first hitting time is defined as 
$$
\tau^X_A(x) = \tau^{1,X}_A(x) = \inf\{t>E_A(x): X_t(x) \in A \}.
$$
Similarly the higher order hitting times are defined as
$$
\tau^{m+1,X}_A(x) = \inf\{t>\tau^{m,X}_A(x) +  E_A(X_{\tau^{m,X}_A(x)}(x)): X_t(x) \in A \}.
$$
\end{dfn}

The first theorem establishes the relation between the hitting times for the flow and the hitting times of the return map $R$.

\begin{thm}\label{thm_hitting}
Let $X_t$ be the suspension flow over a map $R: \Omega \to \Omega$ with an integrable roof function $r$. Assume the probability measure $\mu_\Omega$ is invariant and ergodic under $R$ and the measure $\mu_X$ is the induced measure for the flow as before. Let  $B^X_r(x,t_0)$  be the closed r-ball under the metric $d_X$  centered at $(x,t_0)$. Then the following hold:\\
1. If the first hitting time distribution of $R$ is exponential, then
$$
\mu_X\left(\tau^X_{\bxr}> \frac{t\cdot 2r}{\mu_X(\bxr))}\right) \to e^{-t}\quad \mbox{as}\,\, r\to 0.
$$
2. If the higher order hitting times distribution of $R$ is Poisson, then
$$
\mu_X\left(\tau^{m,X}_{\bxr} \le \frac{t\cdot 2r}{\mu_X(\bxr)} <  \tau^{m+1,X}_{\bxr} \right) \to e^{-t} \frac{t^m}{m!}\quad \mbox{as}\,\, r\to 0.
$$
\end{thm}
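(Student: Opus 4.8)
The plan is to reduce the hitting-time statistics of the suspension flow to those of the base map $R$ by carefully tracking what the $d_X$-ball $\bxr$ looks like inside $X = \Omega\times[0,\infty)/\!\sim$. Since the box metric is used, for $r$ small (smaller than $t_0$ and smaller than the distance from $t_0$ to the roof over a large-measure set of base points) the ball $\bxr$ is essentially a "cylinder" $B^\Omega_r(x)\times[t_0-r,t_0+r]$, possibly with some boundary corrections near points where $r(\cdot)$ is close to $t_0$ or where the flow has just crossed the section; these corrections have $\mu_X$-measure $o(\mu_X(\bxr))$ and can be absorbed. The first key computation is therefore
$$
\mu_X(\bxr) = \frac{2r\cdot \mu_\Omega(B^\Omega_r(x))}{\int r\,d\mu_\Omega}\,(1+o(1)),
$$
which already explains the normalizing factor $t\cdot 2r/\mu_X(\bxr)$: it is asymptotic to $t\cdot\big(\int r\,d\mu_\Omega\big)/\mu_\Omega(B^\Omega_r(x))$, i.e.\ the base normalization $t/\mu_\Omega(B^\Omega_r(x))$ rescaled by the mean roof time, which is exactly the time-change factor between the flow clock and the discrete clock.

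Next I would match the two hitting events. Starting the flow at a $\mu_X$-typical point $(y,s)$, the flow re-enters the cylinder $B^\Omega_r(x)\times[t_0-r,t_0+r]$ within flow-time $T$ if and only if the orbit of $y$ under $R$ enters $B^\Omega_r(x)$ within roughly $\lfloor T/\bar r\rfloor$ iterates, where $\bar r = \int r\,d\mu_\Omega$; the vertical coordinate $t_0$ only contributes a bounded delay (at most one full excursion along the roof, hence $O(1)$ in flow time, which is negligible after dividing by $2r/\mu_X(\bxr)\to\infty$). Making this precise uses Birkhoff's ergodic theorem applied to the roof function $r$ along the $R$-orbit of $y$: the flow-time elapsed after $n$ returns to the section is $S_n r(y) = \sum_{j=0}^{n-1} r(R^j y)$, and $S_n r(y)/n \to \bar r$ for $\mu_\Omega$-a.e.\ $y$; one then needs uniformity of this convergence off a set of small measure (an Egorov-type argument) so that the correspondence $T \leftrightarrow n$ holds simultaneously for all but an $\eps$-fraction of starting points. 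Under this correspondence,
$$
\mu_X\!\left(\tau^X_{\bxr} > \frac{t\cdot 2r}{\mu_X(\bxr)}\right)
\;=\;
\mu_\Omega\!\left(\tau^R_{B^\Omega_r(x)} > \frac{t}{\mu_\Omega(B^\Omega_r(x))}\right) + o(1),
$$
and the hypothesis that the base hitting-time distribution is exponential gives the $e^{-t}$ limit. Part~2 is identical in structure: the $m$-th re-entry of the flow corresponds to the $m$-th return of $R$ under the same time change (the exit-time corrections $E_A$ in Definition~\ref{dfn_hitting} are there precisely to make "re-entry" well defined and cost only $O(1)$ flow-time each, hence $o(2r/\mu_X(\bxr))$ in total), so the Poisson law for $\tau^{m,R}$ transfers verbatim.

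The main obstacle is the uniformity in the time change: Birkhoff's theorem gives $S_nr(y)/n\to\bar r$ pointwise a.e., but to convert a flow-time threshold into a fixed number of iterates valid for the bulk of starting points one must control the fluctuations of $S_n r$ uniformly. I would handle this by fixing $\eps>0$, using Egorov to find a set $G_\eps$ with $\mu_\Omega(G_\eps)>1-\eps$ and $N_\eps$ such that $|S_n r(y)/n - \bar r| < \eps$ for all $n\ge N_\eps$ and all $y\in G_\eps$, noting that as $r\to 0$ the relevant number of iterates $t/\mu_\Omega(B^\Omega_r(x)) \to\infty$ exceeds $N_\eps$, and then sandwiching the flow hitting event between the $R$-hitting events at thresholds $t(1\pm\eps)$-ish; finally letting $\eps\to 0$ and invoking continuity of $t\mapsto e^{-t}$. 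A secondary technical point, the $o(\mu_X(\bxr))$ control on the boundary pieces of $\bxr$, follows from integrability of $r$ together with dominated convergence (the measure of base points $y$ with $r(y)\in[t_0-r,t_0+r]\pmod{\text{excursion}}$ tends to $0$), and does not affect the leading-order asymptotics.
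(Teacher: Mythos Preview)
Your proposal is correct and follows essentially the same strategy as the paper: identify $\bxr$ with the flow box $B^\Omega_r(x)\times[t_0-r,t_0+r]$ (the paper notes this is an \emph{equality} under the box metric, so your boundary-correction discussion is unnecessary caution), write the flow hitting time as the Birkhoff sum $\sum_{i=0}^{\tau^R-1} r(R^i y)$ plus a bounded term, apply the ergodic theorem to $r$ to convert the flow-time threshold into the base-map threshold $t/\mu_\Omega(B^\Omega_r(x))$, and absorb the errors.

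The one methodological difference is how uniformity in the time change is handled. You invoke Egorov to get a set $G_\eps$ on which $S_n r/n$ is uniformly close to $\bar r$, then sandwich between thresholds $t(1\pm\eps)$ and use continuity of the limiting distribution. The paper instead writes $S_{\tau^R} r(y) = c(y,r,m)\,\tau^R\,\bar r$ with a pointwise factor $c(y,r,m)\to 1$, truncates to $\{r(y_0)\le T_0\}$ to make the additive error bounded, and then estimates directly (using $R$-invariance of $\mu_\Omega$) the $\mu_X$-measure of the symmetric-difference sets arising from dropping the additive term and from replacing $c(y,r,m+1)$ by $c(y,r,m)$. Your Egorov route is arguably cleaner and makes the uniformity issue explicit; the paper's direct error bounds are a bit more hands-on but avoid the extra $\eps$ layer. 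Both arrive at the same reduction to the base hypothesis.
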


\begin{remark}
The purpose of the  exit time is to deal with the case when the point $(x,t)$ is already in the ball. In this case we have $E_{\bxr} \le 2r$.
\end{remark}

%Next we study the escape rate problem for flows. Given a set $A\subset \Omega$ we define the survival set at step $n$ as 
%$$
%A^{(n)} = \{A^c \cap (R^{-1}A)^c \cap \cdots \cap (R^{n-1}A)^c\} = \{x \in A^c: %\tau^R_A(x) \ge n \}
%$$
%This is the set of points that have not fallen into the hole $A$ by the time $n$. The escape rate is defined as 
%$$
%\rho^R_{A} = \lim\limits_{n \to \infty} \frac{1}{n} \log(\mu_\Omega(A^{(n)})).
%$$
%We can define the survival set and the escape rate for flows in a similar way.

%\begin{dfn}
%We define the escape rate for the flow $X_t$ as 
%$$
%\rho^X_A = \lim\limits_{T \to \infty} \frac{1}{T} \log(\mu_\Omega(A^{(T)})).
%$$
%where 
%$$
%A^{(T)} = \{x \in A^c: \tau^X_A(x) \ge T \}.
%$$
%\end{dfn}

%We have the following theorem.
%\begin{thm}\label{thm_escape_rate}
%Given $A \subset \Omega$, assume that the escape rate $\rho^R_A$ for the map $R$  %exists. Then for every $t>0$ the escape rate $\rho^X$ to the hole $X_{(0,t)}(A)$ %exists and is given by
%$$
%\rho^X_{X_{(0,t)}(A)} = \frac{\rho^R_A}{\int r\, d\mu_\Omega}
%$$
%where $r$ is the roof function.
%\end{thm}

%Here $X_{(0,t)}(A)$ is the flow box, i.e., $X_{(0,t)}(A) = \{(x,s): x\in A, 0<s<t\}$

%%%%%%%%%%%%%%%%%%%%%%%%%%%%%%%%%%%%%%%%%%%%%%%%%%%%%%%%% EVL for flows
\subsection{Extreme value law for flows}\label{evl flow}
In this section we establish  the relation between the hitting times distribution and the extreme value law for the flow. Let $X_t$ be as before, preserving a probability measure $\mu$. We follow the notation in~\cite{FFT11} and assume that the observable $\varphi$ has the form
$$
\varphi(x) = g\left(\frac{\mu(B_{d(x,z)}(z))}{2d(x,z)}\right)
$$
where $z\in X$ is a given point and $g$ is a function from $X$ to $\mathbb{R}\cup\{+\infty\}$ with the following properties: $g$ is strictly decreasing in a neighborhood of $0$; $0$ is a global maximum for $g$; $g$ satisfies one of the following three properties:

\noindent Type 1. There exist some strictly positive function $p$ such that
$$
g^{-1}\left(g(\frac{1}{t})+yp(g(\frac{1}{t}))\right)=(1+\varepsilon_t)\frac{e^{-y}}{t}.
$$
with $\epsilon_t \to 0$ as $t \to \infty$ for all $y \in \mathbb{R}$.

\noindent Type 2. $g(0) = +\infty$ and there exist $\beta>0$ such that for all $y>0$ it holds
$$
g^{-1}\left(g(\frac{1}{t})y\right) = (1+\varepsilon_t)\frac{y^{-\beta}}{t}.
$$

\noindent Type 3. $g(0) = D<+\infty$ and there exist $\gamma>0$ such that for all $y>0$ it holds
$$
g^{-1}\left(D-g(\frac{1}{t})y\right) =  (1+\varepsilon_t) g^{-1}\left(D-g(\frac{1}{t})\right)\frac{y^{\gamma}}{t}.
$$

Examples of functions satisfying the three types are $g_1(x) = -\log x$, $g_2(x) = x^{-\frac{1}{\beta}}$ and $g_3(x) = D-x^{\frac{1}{\gamma}}$   respectively .

Let $G$ be a distribution function. We say that the flow $X_t$ has hitting times distribution $G$ to balls, if
$$
\lim\limits_{r \to 0}\mu\left(\tau^X_{B_r} > \frac{t\cdot 2r}{\mu(B_r)} \right) = G(t)
$$
where $\tau^X_{B_r}$ is defined as in Definition~\ref{dfn_hitting}. Note that the $2r$ on the numerator is due to the fact that the flow direction does not affect the hitting time.

Like in the discrete case we put $Y_0 = \varphi$ and $Y_t = \varphi\circ X_t$. Let $M_t = \sup\{Y_s: 0 \le s \le t\}$ and $\tau_1(y) = e^{-y} $ for $y \in \mathbb{R}$, $\tau_2(y) = y^{-\beta}$ for $y>0$ and $\tau_3(y) = (-y)^\gamma$ for $y \le 0$. We have the following result.

\begin{thm}\label{thm_evl_flow}
Let $z\in \supp(\mu)$ be a point such that $h_z(r) := \mu(B_r(z))$ is continuous in $r$. Assume we have hitting times distribution $G$ to balls centered at $z$, then we have extreme value law for $M_t$ of the form $H(y) = G(\tau_i(y))$ to the observable $g_i$, $i =1,2,3$. 

In particular, if the $G(t) = e^{-t}$ we get the Gumbel law, Fr\'echet law and Weibull law like in the discrete case respectively for type 1, 2 and 3.
\end{thm}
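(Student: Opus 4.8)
The plan is to follow the strategy of Freitas--Freitas--Todd~\cite{FFT11} adapted to continuous time, reducing the extreme value statement for $M_t$ to the hitting time statement for $X_t$ given in the hypothesis. The starting observation is that, because $g$ is strictly decreasing near $0$ and has a global maximum at $0$, an exceedance $\{Y_s > u\}$ of a high threshold $u$ is exactly the event that $X_s$ lands in a small ball around $z$. Precisely, writing $u_i(y)$ for the natural parametrization of thresholds in each of the three types, one shows there is a radius $r = r(u)$, with $r \to 0$ as $u \to g(0)$, such that
\[
\{x : \varphi(x) > u\} = B_{r(u)}(z)
\]
modulo a $\mu$-null set; here the continuity of $h_z(r) = \mu(B_r(z))$ in $r$ is used to guarantee that the sublevel sets of $\varphi$ are genuinely balls (no atoms on spheres) and that $r(u)$ can be chosen so that the measure matches up. Consequently
\[
\{M_t \le u\} = \{X_s \notin B_{r(u)}(z) \text{ for all } 0 \le s \le t\},
\]
and the right-hand side is, up to the treatment of the exit time, the event $\{\tau^X_{B_{r(u)}(z)} > t\}$ appearing in Definition~\ref{dfn_hitting}.

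Next I would insert the correct time normalization. By hypothesis,
\[
\mu\!\left(\tau^X_{B_r(z)} > \frac{t \cdot 2r}{\mu(B_r(z))}\right) \to G(t)
\]
as $r \to 0$. So I choose the threshold sequence $u_t$ so that $t$ in the EVL corresponds to $t \cdot 2r/\mu(B_r(z))$ in the hitting time statement: one sets the time horizon in $M_t$ equal to $T_r := \lfloor t \cdot 2r / \mu(B_r(z)) \rfloor$ and lets $r \to 0$, obtaining $\mu(M_{T_r} \le u_{T_r}) \to G(t)$. The three Types 1, 2, 3 of the function $g$ are precisely the conditions under which the map $y \mapsto t$ implicit in $u_t = u_{T_r}(y)$ converges to $\tau_i(y)$: substituting the defining relations for $g_i^{-1}$ shows that $r(u_{T_r}(y))$ satisfies $2 r(u_{T_r}(y)) / \mu(B_{r(u_{T_r}(y))}(z)) \sim \tau_i(y)/\mu(B_{r}(z)) \cdot (\dots)$, so that the normalized time tends to $\tau_i(y)$ and hence $\mu(M_{T}(\cdot) \le u_{T}(y)) \to G(\tau_i(y)) = H(y)$. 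This is the computational heart of the argument and is where the algebra of Types 1--3 (exactly as in~\cite{FFT11}) gets reused verbatim; I would state it as a lemma identifying, for each type, the asymptotics of the threshold-to-time correspondence.

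The main obstacle is the discrepancy between $\{\tau^X_{B_r} > s\}$ and $\{M_s \le u\}$ coming from the exit time $E_A$ in Definition~\ref{dfn_hitting}: the flow hitting time ignores an initial sojourn inside the target ball, whereas $M_s$ does not. By the Remark following Theorem~\ref{thm_hitting}, $E_{B_r(z)} \le 2r$, so the two events differ only on a set of the orbit of Lebesgue-length $O(r)$ in the flow direction, whose $\mu$-measure is $O(r \cdot \mu_\Omega(\cdot))$-negligible compared with the time scale $t\cdot 2r/\mu(B_r(z))$ once we divide by $\mu(B_r(z))$; making this precise — i.e. showing the symmetric difference of the two events has $\mu$-measure $\to 0$ after the rescaling — is the one genuinely new estimate compared to the discrete-time proof. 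A secondary technical point is ensuring $r(u)$ is well defined and monotone in $u$, which is exactly where the continuity hypothesis on $h_z$ enters; with that in hand, the final assertion about Gumbel, Fréchet and Weibull laws is immediate by plugging $G(t) = e^{-t}$ into $H(y) = G(\tau_i(y))$ and reading off $e^{-e^{-y}}$, $e^{-y^{-\beta}}$, $e^{-(-y)^\gamma}$.
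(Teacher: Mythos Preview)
Your overall strategy is the same as the paper's: identify $\{\varphi>u_t\}$ with a ball $B_{r_t}(z)$ (the paper writes $r_t=l(g^{-1}(u_t))$ with $l(y)=\inf\{r:\mu(B_r(z))/(2r)=y\}$), identify $\{M_t<u_t\}$ with $\{\tau^X_{B_{r_t}(z)}>t\}$, and then match the time scale to the hitting-time hypothesis. So the architecture is right.

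Where you diverge from the paper is in locating the actual work. You call the exit-time discrepancy ``the main obstacle'' and ``the one genuinely new estimate.'' It is not: the symmetric difference of $\{M_t<u_t\}$ and $\{\tau^X_{B_{r_t}}>t\}$ is contained in $B_{r_t}(z)$ itself (points that start inside the ball), hence has measure $\mu(B_{r_t})\to 0$. The paper simply writes the two sets as equal and moves on; no estimate is needed beyond this observation.

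The place where the paper does do a genuine estimate --- and which your proposal glosses over as ``the algebra of Types 1--3 gets reused verbatim'' --- is the $(1+\varepsilon_t)$ mismatch in the time normalization. From the Type~1 relation one gets
\[
t=\frac{(1+\varepsilon_t)\,\tau\cdot 2r_t}{\mu(B_{r_t}(z))}=:T_t',\qquad T_t:=\frac{\tau\cdot 2r_t}{\mu(B_{r_t}(z))},
\]
so one must compare $\mu(\tau^X_{B_{r_t}}>T_t')$ with $\mu(\tau^X_{B_{r_t}}>T_t)$, the latter being what converges to $G(\tau)$ by hypothesis. The paper bounds the difference by $\mu\bigl(X_{[T_t,T_t']}(B_{r_t})\bigr)\le C(T_t'-T_t)\mu(B_{r_t})/(2r_t)=\varepsilon_t\tau$, using crucially that the flow-invariant measure $\mu_X$ is Lebesgue along the flow direction. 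Your time-normalization paragraph (with the unnecessary floor $T_r=\lfloor\cdot\rfloor$ and the unspecified ``$\sim$'' relation) does not isolate this step, and you never invoke the flow-direction-Lebesgue property, which is precisely what replaces the discrete-time counting argument from~\cite{FFT11}.
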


%%%%%%%%%%%%%%%%%%%%%%%%%%%%%%%%%%%%%%%%%%%%%%%%%%%%%%%%% EVL for polynomial towers
\subsection{Extreme value law for Young's tower with polynomial tail}

Extreme value law has been studied extensively for the last decade.  In~\cite{HNT} it is shown that maps modeled by Young's tower with exponential tail satisfies the extreme value law. For Young's tower with polynomial tail, Haydn, Wassilewska~\cite{HW15} and Pene, Saussol\cite{PS} showed that such maps has exponentially distributed return time. This, together with the Theorem 1 in~\cite{FFT11} showed that maps modeled by Young's tower with polynomial tail also satisfy the extreme value law. Here we give a direct proof of the extreme value law by directly verifying the condition $D_2$ and $D'$ given in~\cite{FF08}. For more details see Section~\ref{thm_evl_tower}. 

We will make the following assumptions on the measurer $\mu$:
\begin{asm}\label{Annulus} There exist $\delta>1$ such that for $r>0$ small enough we have
$$
\mu(B_{r+r^\delta}(x)\setminus B_{r}(x)) = o(\mu(B_r(x)))
$$
for almost every $x$.
\end{asm}

\begin{asm}\label{Dimension}There exist $C,C'>0$, $d_1>d_0>0$ such that for $\mu$-almost every $x$
$$
C'r^{d_1}<\mu(B_r(x)) < Cr^{d_0}.
$$
\end{asm}
For a fixed $z\in M$ we put 
$$
h_z(r) = \mu(B_r(z)). 
$$
Then $h$ is a non-increasing function.  Assumption~\ref{Annulus} implies that for almost every $z$, $h_z(r)$ is continuous from the right for $r$ small. Notice that both assumptions are satisfied by measures that are absolutely continuous w.r.t. Lebesgue with a bounded density. In this case we can choose $d_0$ and $d_1$ close to the dimension of $M.$

\begin{thm}\label{thm_evl_tower}
Assume that $T$ can be modeled by a Young's tower with polynomial tail, i.e., \ $\mu(\{R>n\}) \le C n^{-p}$ for some $p>8$. Also assume that the measure $\mu$ satisfies the regularity assumptions~\ref{Annulus} and~\ref{Dimension}.

Then $T$ satisfies the extreme value law.
\end{thm}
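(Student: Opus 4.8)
The plan is to reduce the extreme value law to the two technical conditions $D_2$ and $D'$ of Freitas--Freitas~\cite{FF08}, applied to the stationary process $Y_n = \varphi \circ T^n$ with $\varphi$ of the three admissible types centered at a $\mu$-typical point $z$. Once $D_2$ and $D'$ are verified, the abstract machinery of~\cite{FF08} gives $\mu(M_n \le u_n) \to e^{-\tau}$ along the usual scaling $n\,\mu(\{\varphi > u_n\}) \to \tau$, which by the choice of $g_i$ translates into the Gumbel, Fr\'echet and Weibull laws respectively; this last translation is exactly the computation already packaged in Theorem~\ref{thm_evl_flow}, so the real content is the verification of the two conditions using only the Young tower structure and Assumptions~\ref{Annulus} and~\ref{Dimension}.

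The first step is to set up the tower: write $T$ as the quotient of a Young tower $\Delta$ with base $\Delta_0$, reference measure with bounded density, return time $R$ satisfying $\mu(R>n)\le Cn^{-p}$, $p>8$, and the associated sequence of correlation decay estimates $\left|\int f\,(g\circ T^n)\,d\mu - \int f\,d\mu\int g\,d\mu\right| \le C\,\|f\|_{\mathrm{Lip}}\,\|g\|_\infty\, n^{-(p-1)}$ for Lipschitz $f$. To verify $D_2$ I would approximate the target event $\{\varphi > u_n\} = \{x : d(x,z) < r_n\}$ (a ball, up to the monotone reparametrisation through $h_z$) by a Lipschitz function supported on a slightly larger ball $B_{r_n + r_n^\delta}(z)$; Assumption~\ref{Annulus} controls the error of this approximation, Assumption~\ref{Dimension} controls the Lipschitz norm by a power $r_n^{-d_1}$ of the radius, and the decay rate $p>8$ is comfortably enough to kill this polynomial loss for the relevant gap sizes $t_n$, giving $\left|\mu(U_n \cap T^{-t}(M_{t,\ell}^c)) - \mu(U_n)\,\mu(M_{t,\ell}^c)\right| \le \gamma(n,t)$ with $\sum \gamma$ summable along the subsequence used in~\cite{FF08}.

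The second, and I expect the more delicate, step is $D'$, which demands $\lim_{k\to\infty}\limsup_{n} n\sum_{j=1}^{\lfloor n/k\rfloor}\mu(U_n\cap T^{-j}U_n) = 0$, i.e.\ control of the short returns to the shrinking ball. This is where the polynomial tail bites: points that return to $B_{r_n}(z)$ within time $j$ either do so along an orbit that stays in the base long enough — handled by the decay of correlations exactly as in the $D_2$ step, summed over $j$ up to $n/k$ and contributing $o(1/k)$ — or do so because of a long excursion high up the tower, whose measure is bounded by $\mu(R > c\,j)\le C(cj)^{-p}$; multiplying by $n$ and summing over $j \le n/k$ using $p>8$ (which in particular forces $p$ strictly bigger than $2$, giving summability, with the extra margin absorbing the $n$ and $h_z$ factors via Assumption~\ref{Dimension}) yields a bound that vanishes as $k\to\infty$. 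One must also discard the small set of atypical $z$ where the annulus or dimension bounds fail, and note that $h_z$ being continuous (which Assumption~\ref{Annulus} guarantees for a.e.\ $z$) lets us realise every $\tau$ as a genuine limit $n\,\mu(B_{r_n}(z))\to\tau$. The main obstacle is bookkeeping the interplay between the radius-dependent Lipschitz norm $r_n^{-d_1}$ and the polynomial correlation rate $n^{-(p-1)}$ across the whole range of gaps; the hypothesis $p>8$ is precisely the slack that makes every such sum converge, so the proof is essentially a careful accounting argument rather than a conceptual leap.
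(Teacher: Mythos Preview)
Your $D_2$ sketch is essentially the paper's argument: Lipschitz smoothing on an annulus of width $r_n^\delta$, Assumption~\ref{Annulus} for the approximation error, and the correlation bound of Lemma~\ref{decay} to close. (One small slip: the Lipschitz norm of the smoothed indicator is $r_n^{-\delta}$, not $r_n^{-d_1}$; the dimension exponents enter only through~(\ref{radius}) when you convert $r_n$ to a power of $n$.)

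The $D'$ part, however, has a real gap. Your proposed dichotomy---``orbit stays near the base, use decay of correlations'' versus ``long excursion up the tower, use the tail $\mu(R>cj)$''---does not cover the genuinely hard regime, which is \emph{very short} returns, say $1\le j\le \log n$. For such $j$ the correlation decay has not kicked in (the bound $j^{-(p-1)}$ is useless when multiplied by $n$ and summed), and the tail bound $\mu(R>cj)$ is far too crude to beat the factor $n$ out front. The paper's proof splits instead by the time scale: for $\log n<j<n^\varepsilon$ one imports the cylinder estimates of \cite[\S4.7]{HW15}, while for $j\le\log n$ one proves a separate lemma (Lemma~\ref{main_lemma}) bounding the measure of the set $V_{r_n}=\{x:B_{r_n}(x)\cap T^{-j}B_{r_n}(x)\neq\emptyset\text{ for some }j\le\log n\}$ by $C(\log n)^{-\delta}$. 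That lemma is the technical core: it uses the tower's cylinder structure, bounded distortion, and a diameter contraction argument, none of which is captured by ``decay of correlations plus tail''.

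There is a second missing ingredient. Even with Lemma~\ref{main_lemma} in hand, what $D'$ needs is not that $V_{r_n}$ is small globally but that $\mu(B_{r_n}(z)\cap T^{-j}B_{r_n}(z))$ is small for a \emph{typical center} $z$. The paper bridges this with Collet's maximal function technique: apply the Hardy--Littlewood maximal inequality to $\mathbbm{1}_{V_r}$, run a Borel--Cantelli argument along a subsequence $r_{\exp(k^\zeta)}$, and conclude that for a.e.\ $z$ the fraction of $V_{r_n}$ inside $B_{r_n}(z)$ is $o(1)$. Your proposal does not mention this step at all, and without it the short-return estimate cannot be localised to the ball around $z$.
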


All the proofs can be found in Section~\ref{proof}.

%%%%%%%%%%%%%%%%%%%%%%%%%%%%%%%%%%%%%%%%%%%%%%%%%%%%%%%%%
%%%%%%%%%%%%%%%%%%%%%%%%%%%%%%%%%%%%%%%%%%%%%%%%%%%%%%%%% proofs
\section{Proof of Theorems}\label{proof}

%%%%%%%%%%%%%%%%%%%%%%%%%%%%%%%%%%%%%%%%%%%%%%%%%%%%%%%%% Hitting times and escape rate
\subsection{Proof of Theorem~\ref{thm_hitting} }
First we establish the relation between the hitting time of the suspension flow and the return map $R$. For every $(y_0,t_0) \in X$ let $T(y_0,t_0) = r(y_0)-t_0$, i.e., \ $T$ is the minimal time under the flow for orbit beginning at $(y_0,t_0)$ to hit $\Omega \times \{0\}$. Then for every $A \subset \Omega$ and $s>0$ we have 
$$
\tau^X_{X_{[-s,s]}(A)}(y_0,t_0) =T(y_0,t_0) + \sum_{i=0}^{\tau^R_A(y)-1} r(R^i(y))   -s.
$$
Here $y=X_{T(y_0,t_0)}(y_0,t_0) = (R(y_0),0)$ and $X_{[-s,s]}(A)$ is the flow box as before. Notice that this hold true even for points $(y_0,t_0)$ contained in $X_{[-s,s]}(A)$. Similarly for higher order hitting times we have 
\begin{equation}
\tau^{m,X}_{X_{[-s,s]}(A)}(y_0,t_0) =T(y_0,t_0) + \sum_{i=0}^{\tau^{m,R}_A(y)-1} r(R^i(y)) -s.
\end{equation}\label{hitting_flow}

Now we fix some $x \in A$ and take $A = \br$. Since we use the box metric we have $X_{[s-r,s+r]}(A)=\bxr$

By the Birkhoff's ergodic theorem, 
$$
\frac{1}{n}\sum_{i=0}^{n-1}r(R^i(z)) \to\int_{\Omega}r(z)\,d\mu_\Omega.
$$
For $z$ in a set of full measure which me denote by $G$. Since $\mu_\Omega$ is ergodic, for almost every $z$ and every $m \ge 1$ we have $\tau^{m,R}_A(y) \to \infty$ as $r\to 0$. Taking another full measure subset of $G$ if necessary, we have for $y\in G$,
\begin{align*}
\tau^{m,X}_{\bxr}(y_0,t_0) =& \sum_{i=0}^{\tau^{m,R}_{\br}(y)-1} r(R^i(y)) + T(y_0,t_0)+s-r\\
= & c(y,r,m) \cdot \tau^{m,R}_{\br}(y)\int_{\Omega}r(z)\,d\mu_\Omega + T(y_0,t_0)+s-r
\end{align*}
with $c(y,r,m) \to 1$ as $r\to 0$

Now we can prove Theorem~\ref{thm_hitting}. Notice that item 1 in Theorem~\ref{thm_hitting} is a special case of item 2 if we define $\tau^{0,R} = \tau^{0,X} = 0$. To simplify notation, write 
$T = T(y_0,t_0)$ and 
%$\mathbb{E}(r) = \int_{\Omega}r(z)\,d\mu_\Omega$,
$$
 \mathbb{E}(r) = \int_{\Omega}r(z)\,d\mu_\Omega, \quad \tau^R(m)=\tau^{m,R}_{\br}(y) \quad \mbox{and}\quad \tau^X(m)=\tau^{m,X}_{\bxr}(y_0,t_0) .
$$
Then we get
\begin{align*}
&\left\{\tau^X(m)\le \frac{t\cdot 2r}{\mu_X(\bxr)} < \tau^X(m+1)\right\}\\
=&\left\{\sum_{i=0}^{\tau^R(m)-1} r(R^i(y)) + T+s-r \le \frac{t\cdot 2r}{\mu_X(\bxr)}<\sum_{i=0}^{\tau^R(m)-1} r(R^i(y)) + T+s-r\right\}\\
=&\left\{c(y,r,m) \tau^R(m)\mathbb{E}(r)  \le \frac{t\cdot 2r}{\mu_X(\bxr)}-T-s+r<c(y,r,m+1) \tau^R(m+1)\mathbb{E}(r)\right\}\\
=& \left\{\tau^R(m) \le \frac{t\cdot 2r}{c(y,r,m)\mathbb{E}(r)  \cdot\mu_X(\bxr)} - \frac{T+s-r}{c(y,r,m)\mathbb{E}(r)},\right.\\&\hspace*{5mm} \left.\tau^R(m+1) > \frac{t\cdot 2r}{c(y,r,m+1)\mathbb{E}(r)  \cdot\mu_X(\bxr)} - \frac{T+s-r}{c(y,r,m+1)\mathbb{E}(r) }.\right\}\end{align*}

Since $r$ is integrable, $T=T(y_0,t_0) = r(y_0)-t_0$ is almost surely finite. Thus if we fix a $T_0$ big enough and define $G_{T_0} = \{y\in\Omega: r(y) \le T_0\}$, this set has measure close to one. Since $c(y,r,m)$ and $c(y,r,m+1)$ both converge to $1$ as $r \to 0$ for $y\in G$, we have $\frac{T+s-r}{c(y,r,m)} = \mathcal{O}(1)$ on $G \cap G_{T_0}$; the same hold for $m+1$. Dropping these two terms from the previous set will result in an error which can be estimated as 
\begin{align*}
\mu_X(error_1) =& \mu_X\left((y_0,t_0):y_0\in G\cap G_{T_0} \text{ and } \tau^R(m) \le \frac{T+s-r}{c(y,r,m)}\right)\\
 \le& \mu_\Omega\left(y_0\in G\cap G_{T_0}: \tau^R(m) \le \frac{T+s-r}{c(y,r,m)}\right)\\
 \le& \mu_\Omega\left( \tau^R(m) \le \frac{T+s-r}{c(y,r,m)}\right)\\
 \le& \mu_\Omega\left(\br \cup R^{-1}(\br) \cup \cdots \cup R^{-\frac{T+s-r}{c(y,r,m)}}(\br) \right)\\
 \le&  \frac{T+s-r}{c(y,r,m)} \cdot\mu_\Omega(\br) \to 0
 \end{align*}
as $r\to 0$. Similarly the error cause by changing $c(y,r,m+1)$ to $c(y,r,m)$ can be estimated as
\begin{align*}
\mu_X(error_2) \le& \mu_X\bigg((y_0,t_0):y_0\in G\cap G_{T_0} \text{ and } \\
&\left. \frac{t\cdot 2r}{c(y,r,m)\mathbb{E}(r)  \mu_X(\bxr)}<\tau^R(1)\le \frac{t\cdot 2r}{c(y,r,m+1)\mathbb{E}(r) \mu_X(\bxr)}\right)\\
 \le& \mu_\Omega\left( \frac{t\cdot 2r}{c(y,r,m)\mathbb{E}(r)  \mu_X(\bxr)}<\tau^R(1)\le \frac{t\cdot 2r}{c(y,r,m+1)\mathbb{E}(r) \mu_X(\bxr)}\right)\\
 \le&  \frac{t\cdot 2r}{\mathbb{E}(r)\mu_X(\bxr)}\left|\frac{1}{c(y,r,m)} - \frac{1}{c(y,r,m+1)} \right| \mu_\Omega(\br).
 \end{align*}
Notice that 
\begin{align*}
\frac{ 2r}{\mathbb{E}(r)\mu_X(\bxr)} =& \frac{ 2r}{\mathbb{E}(r)\mu_X(X_{[-r,r]}(\br))}\\
 =& \frac{ 2r}{\mathbb{E}(r)\mu_\Omega(\br)\cdot \frac{2r}{\mathbb{E}(r)}}\\
 =& \frac{1}{\mu_\Omega(\br)},
\end{align*}
then we have 
$$
\mu_X(error_2) \le\left|\frac{1}{c(y,r,m)} - \frac{1}{c(y,r,m+1)} \right| \to 0
$$
since both $c(y,r,m)$ and $c(y,r,m+1)$ converge to $1$ as $r$ converges to $0$.

Hence we are left to show that 
\begin{align*}
\mu_X\left((y_0,t_0):y_0\in G\cap G_{T_0} \text{ and }\tau^R(m) \le \frac{t\cdot 2r}{c(y,r,m)\mathbb{E}(r)  \cdot\mu_X(\bxr)}<\tau^R(m+1)\right)
\end{align*}
converges to the Poisson distribution. Dropping the restriction on $G\cap G_{T_0}$ is negligible since the measure of this set can be made arbitrarily close to one by taking a bigger $T_0$. Since
\begin{align*}
&\mu_X\left(\tau^R(m) \le \frac{t\cdot 2r}{c(y,r,m)\mathbb{E}(r)  \cdot\mu_X(\bxr)}<\tau^R(m+1)\right)\\
=&\mu_\Omega\left(\tau^R(m) \le \frac{t}{c(y,r,m)  \cdot\mu_\Omega(\br)}<\tau^R(m+1)\right)
\end{align*}
and $c(y,r,m) \to 1$ the result follows.

%%%%%%%%%%%%%%%%%%%%%%%%%%%%%%%%%%%%%%%%%%%%%%%%%%%%%%%%% EVL for flows
\subsection{Proof of Theorem~\ref{thm_evl_flow}}
We only prove Theorem~\ref{thm_evl_flow} for $i=1$, function $g$ with $g^{-1}\left(g(\frac{1}{t})+yp(g(\frac{1}{t}))\right)=(1+\varepsilon_t)\frac{e^{-y}}{t}.$ The other two cases follow similarly.

For given $z \in Z$, define $l(y) = \inf\{r>0:\frac{\mu(B_r(z))}{2r} = y\}$. It is easy to see that if $h_z(r) = \frac{\mu(B_r(z))}{2r}$ is continuous at r then $\frac{\mu(B_{l(y)}(z))}{2l(y)} = y.$

Set $\tau = e^{-y}$ for $y \in \mathbb{R}$. Define
$$
u_t(y) = g\left(\frac{1}{t}\right) + p\left(g\left(\frac{1}{t}\right)\right) y.
$$
Recall that $Y_t = \varphi\circ X_t$ and $\varphi(y) = g\left(\mu(B_{d(y,z)}(z))\right)$ we have the following:

\begin{align*}
\mu(Y_0 > u_t) =& \mu\{x: g\left(\frac{\mu(B_{d(x,z)}(z))}{2d(x,z)}\right) > u_t\}\\
=& \mu\{x: \frac{\mu(B_{d(x,z)}(z))}{2d(x,z)} < g^{-1}(u_t)\}\\
=& \mu\{x:  \frac{\mu(B_{d(x,z)}(z))}{2d(x,z)} < \frac{\mu\left(B_{l(g^{-1}(u_t))}(z)\right)}{2l(g^{-1}(u_t))} \}\\
=& \mu\{x: d(x,z) < l(g^{-1}(u_t))\} \numberthis \label{11}\\
=& \mu(B_{l(g^{-1}(u_t))}(z))\\
=& g^{-1}(u_t) \cdot 2l(g^{-1}(u_t)) = (1+\varepsilon_t)\frac{\tau \cdot 2l(g^{-1}(u_t))}{t}.
\end{align*}
Here we use the assumption that $g$ is strictly decreasing in a neighborhood of $0$. \ref{11} is due to the fact that the measure $\mu$ is Lebesgue along the flow direction. The last two lines also give
$$
t = \frac{(1+\varepsilon_t)\tau\cdot 2l(g^{-1}(u_t))}{\mu(B_{l(g^{-1}(u_t))}(z))}.
$$

Similarly for every $t>0$ we have 

\begin{align*}
\{M_t < u_t\} =& \bigcap_{s=0}^t \{X_s < u_t\}\\
=& \bigcap_{s=0}^t \{ g\left(\frac{\mu(B_{d(X_s(x),z)}(z))}{2d(X_s(x),z)}\right) < u_t\}\\
=& \bigcap_{s=0}^t \{\frac{ \mu(B_{d(X_s(x),z)}(z))}{2d(X_s(x),z)} > g^{-1}(u_t)\}\\
=& \bigcap_{s=0}^t \{ \frac{\mu(B_{d(X_s(x),z)}(z))}{2d(X_s(x),z)} >  \frac{\mu(B_{l(g^{-1}(u_t))}(z))}{2l(g^{-1}(u_t))} \}\\
=& \bigcap_{s=0}^t \{d(X_s(x),z) > l(g^{-1}(u_t)) \}\\
=& \{\tau^X_{B_{l(g^{-1}(u_t))}(z)}(x) > t\}.
\end{align*}
Hence
\begin{align*}
\mu(M_t < u_t) =& \mu(\tau^X_{B_{l(g^{-1}(u_t))}(z)}(x) > t)\\
=&\mu\left(\tau^X_{B_{l(g^{-1}(u_t))}(z)}(x) > \frac{(1+\varepsilon_t)\tau\cdot 2l(g^{-1}(u_t))}{\mu(B_{l(g^{-1}(u_t))}(z))}\right).
\end{align*}

To simplify notations we set $T_t = \frac{\tau \cdot 2l(g^{-1}(u_t))}{\mu(B_{l(g^{-1}(u_t))}(z))}$ and $T'_t=\frac{(1+\varepsilon_t)\tau\cdot 2l(g^{-1}(u_t))}{\mu(B_{l(g^{-1}(u_t))}(z))}$. Assuming without loss of generality that $\varepsilon_t>0$, we have 

\begin{align*}
&\left|\mu(\tau^X_{B_{l(g^{-1}(u_t))}(z)}(x) > T_t) - \mu(\tau^X_{B_{l(g^{-1}(u_t))}(z)}(x) > T'_t) \right|\\
=& \mu\left(T_t< \tau^X_{B_{l(g^{-1}(u_t))}(z)}(x)\le T'_t\right)\\
\le& \mu\left(X_{[T',T'_t]}(B_{l(g^{-1}(u_t))}(z))\right)\\
\le& C(T'_t - T_t)\frac{\mu(B_{l(g^{-1}(u_t))}(z))}{2l(g^{-1}(u_t))}\\
=& \varepsilon_t \tau.
\end{align*}
Here  we use again the fact that $\mu$ is Lebesgue along  the flow direction.
It follows that 
\begin{align*}
\left|\mu(M_t < u_t) - G(\tau)\right| = & \left|\mu\left(\tau^X_{B_{l(g^{-1}(u_t))}(z)}(x) > T'_t \right) -  G(\tau)\right|\\
\le& \left|\mu\left(\tau^X_{B_{l(g^{-1}(u_t))}(z)}(x) > T_t \right) -G(\tau) \right|
\\&+\left|\mu\left(\tau^X_{B_{l(g^{-1}(u_t))}(z)}(x) > T_t \right) -\mu\left(\tau^X_{B_{l(g^{-1}(u_t))}(z)}(x) > T'_t \right)\right|\\
\le& \left|\mu\left(\tau^X_{B_{l(g^{-1}(u_t))}(z)}(x) > T_t \right) -G(\tau) \right| + \varepsilon_t \tau.
\end{align*}
Since $X_t$ has hitting times distribution $G$ and $\varepsilon_t \to 0$, the theorem follows.

%%%%%%%%%%%%%%%%%%%%%%%%%%%%%%%%%%%%%%%%%%%%%%%%%%%%%%%%% EVL for towers
\subsection{Proof of Theorem~\ref{thm_evl_tower}}
Let $\{Y_n\}$ be a stochastic process and define
$$
M_n = \max\{Y_0, Y_1, \ldots, Y_n\}
$$
and
$$
M_{l,n} = \max\{Y_l, Y_{l+1}, \ldots, Y_{l+n}\}.
$$

It is enough to verify conditions $D_2(u_n)$ and $D'(u_n)$ given in ~\cite{FF08}. 
%the following two conditions. 
\begin{cdn} $D_2(u_n)$-
We say condition  $D_2(u_n)$ holds if for any integers $l,t$ and $n$
$$
|\mu(Y_0>u_n, M_{t,l}<u_n) - \mu(Y_0>u_n)\mu(M_l<u_n)| \le \gamma(n,t)
$$
where $\gamma(n,t)$ is a non-increasing sequence in $t$ for every $n$ and satisfies $\gamma(n,t_n) = o(\frac{1}{n})$ for some sequence $t_n = o(n)$, $t_n \to \infty$.
\end{cdn}
\begin{cdn} $D'(u_n)$- We say condition $D'(u_n)$ holds if 
$$
\lim\limits_{k\to\infty}\limsup_{n} n\cdot\sum_{j=1}^{[n/k]}\mu(Y_0>u_n, Y_j>u_n ) =0.
$$
\end{cdn}

As in the previous section we only prove the theorem for Type 1 observables, namely 
$$
\varphi(x) = g(\mu(B_{d(x,z)}(z)))
$$
with $g$ satisfying
$$
g^{-1}\left(g(\frac{1}{n})+yp\left(g(\frac{1}{n})\right)\right) = (1+\varepsilon_n)\frac{e^{-y}}{n}.
$$
\subsubsection{Proof of $D_2(u_n)$}
First we show $D_2(u_n)$. Following the same proof as Lemma~3.1 in~\cite{GHN} we get 
\begin{lem}\label{decay}
Let $\Phi$ be Lipschitz and $\Psi$ be the indicator function of any measurable set. Then for every $j>0$ we have
$$
\left| \int \Phi \Psi\circ T^j\, d\mu -\int\Phi\,d\mu \int \Psi\circ T^j\, d\mu\right| \le \mathcal{O}(1)\left( \|\Phi\|_\infty \tau_1^{[j/2]} + \|\Phi\|_{Lip} \cdot[j/2]^{-p}\right)
$$
for some $0<\tau_1<1$.
\end{lem}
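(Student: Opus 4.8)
The plan is to transfer the estimate to the Young tower and read it off from the tower's mixing properties; since the statement is \cite[Lemma~3.1]{GHN} specialised to the present notation, I will only lay out the main steps. Let $(\Delta,F,\hat\mu)$ be the tower modelling $T$, with projection $\pi\colon\Delta\to M$ such that $\pi\circ F=T\circ\pi$ and $\pi_*\hat\mu=\mu$, and whose base $\Delta_0$ carries a Gibbs--Markov return map with return time $R$ satisfying $\hat\mu(R>n)\le Cn^{-p}$. Since $\Phi$ is Lipschitz on $M$, $\pi$ is Lipschitz, and $F$ contracts stable leaves exponentially, $\Phi\circ\pi$ lies in the Lipschitz class on $\Delta$ adapted to the tower, with $\|\Phi\circ\pi\|_\infty\le\|\Phi\|_\infty$ and tower-Lipschitz norm $\lesssim\|\Phi\|_{Lip}$, while $\Psi\circ\pi$ is just the indicator of a measurable subset of $\Delta$, so $\|\Psi\circ\pi\|_\infty\le 1$. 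Hence it suffices to bound the correlation of $\Phi\circ\pi$ and $(\Psi\circ\pi)\circ F^{j}$ on $\Delta$; I will drop $\pi$ from the notation below.

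Next I would split $j=[j/2]+(j-[j/2])$ and, writing $\mathcal L$ for the transfer operator of $T$ with respect to $\mu$ (so that $\mathcal L 1=1$, $\int\mathcal L f\,d\mu=\int f\,d\mu$ and $\|\mathcal L f\|_\infty\le\|f\|_\infty$), use the duality $\int(\mathcal L^{k}f)\,g\,d\mu=\int f\,(g\circ T^{k})\,d\mu$ with $k=[j/2]$ to write
\[
\int\Phi\,(\Psi\circ T^{j})\,d\mu-\int\Phi\,d\mu\int\Psi\,d\mu=\int\!\Bigl(\mathcal L^{[j/2]}\Phi-\int\Phi\,d\mu\Bigr)\bigl(\Psi\circ T^{\,j-[j/2]}\bigr)\,d\mu.
\]
The right-hand side has absolute value at most $\|\Psi\|_\infty\bigl\|\mathcal L^{[j/2]}\Phi-\int\Phi\,d\mu\bigr\|_{L^{1}(\mu)}\le\bigl\|\mathcal L^{[j/2]}\Phi-\int\Phi\,d\mu\bigr\|_{L^{1}(\mu)}$, so the whole problem reduces to estimating how fast $\mathcal L^{n}\Phi$ equilibrates in $L^{1}$, with $n=[j/2]$. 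This is precisely the step that absorbs the fact that $\Psi$ carries no regularity, and it is the reason for pushing $\Phi$ forward by the transfer operator rather than trying to smooth the indicator.

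For the equilibration estimate I would run the standard tower analysis on $\Delta$: first use the uniform exponential contraction along stable leaves to replace $\Phi$, after $[n/3]$ iterates, by a function constant on stable leaves (an error of size $\le C\|\Phi\|_{Lip}\beta^{[n/3]}$, which for $n$ large is $\le C\|\Phi\|_{Lip}n^{-p}$), and then run the operator-renewal (equivalently, coupling) argument on the quotient Gibbs--Markov tower, whose mixing rate is controlled by the return-time tail $\hat\mu(R>n)\le Cn^{-p}$ together with the bounded distortion of $F^{R}$. This yields a decomposition $\mathcal L^{n}\Phi-\int\Phi\,d\mu=g'_{n}+g''_{n}$ with $\|g'_{n}\|_\infty\le C\|\Phi\|_\infty\tau_1^{\,n}$ for some $\tau_1\in(0,1)$ — the exponentially mixing ``core'' term — and $\|g''_{n}\|_{L^{1}(\mu)}\le C\|\Phi\|_{Lip}n^{-p}$ — the term coming from orbits with atypically long returns to the base, whose mass the tail bound controls. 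Since $\|g'_{n}\|_{L^{1}}\le\|g'_{n}\|_\infty$, substituting $n=[j/2]$ into the bound of the previous paragraph gives exactly $\mathcal{O}(1)\bigl(\|\Phi\|_\infty\tau_1^{[j/2]}+\|\Phi\|_{Lip}[j/2]^{-p}\bigr)$.

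The delicate point — and the real content borrowed from \cite[Lemma~3.1]{GHN} — is this equilibration estimate: keeping the two function norms correctly apportioned (only $\|\Phi\|_\infty$ on the exponential term, $\|\Phi\|_{Lip}$ on the polynomial one), and verifying that the rate is genuinely $n^{-p}$ rather than the weaker $n^{-(p-1)}$ that one would naively get from $\sum_{k>n}\hat\mu(R>k)$, both require care in how the tower is truncated and in how the stable-leaf averaging interacts with the renewal sum. Everything else — the reduction to the tower, the transfer-operator identity, and the elementary bound $\|\mathcal L f\|_\infty\le\|f\|_\infty$ — is routine bookkeeping.
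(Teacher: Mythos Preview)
Your proposal is correct and aligns with the paper's own treatment: the paper does not give an independent proof either, but simply states ``Following the same proof as Lemma~3.1 in~\cite{GHN} we get'' and records the conclusion. Your sketch of the transfer-operator reduction and the tower equilibration estimate is exactly the content of that reference, so there is nothing to compare---you have just unpacked what the paper leaves as a citation.
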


Put $u_n(y) = g\left(\frac{1}{n}\right) + yp\left(g\left(\frac{1}{n}\right)\right)$ then
$$
\{Y_0>u_n\} = B_{l(g^{-1}(u_n))}(z);
$$
here $l(y) = \inf\{r>0: \mu(B_r(z))=y\}$ as before. Recall that $h_z(r)$ defined as $h_z(r) = \mu(B_r(z))$ is continuous from the right, we get that 
$$
\mu(B_{l(y)}(z)) = y.
$$
By Assumption~\ref{Dimension} we get
$$
C y^{1/d_0}\le l(y) \le C' y^{1/d_1}
$$
for some constant $C$ and $C'$. To simplify notations we write $r_n = l(g^{-1}(u_n))$ and omit $z$. We have
\begin{equation}\label{measure_ball}
\mu(B_{r_n}(z)) = g^{-1}(u_n) = (1+\varepsilon_n)\frac{e^{-y}}{n}
\end{equation}
and
\begin{equation}\label{radius}
 C n^{-1/d_0} \le r_n \le C' n^{-1/d_1}.
\end{equation}

Next we approximate the indicator function of $\{Y_0>u_n\} =B_{r_n}$ by a Lipschitz function $\Phi$ in the following way: $\Phi =1$ on $ B_{r_n}$; $\Phi = 0$ outside $ B_{r_n+r_n^\delta }$ and $\Phi$ decays linearly to $0$ on the annulus $ B_{r_n+r_n^\delta } \setminus B_{r_n} $. Here $\delta$ is the constant given in Assumption~\ref{Annulus}. Therefore $\Phi$ has Lipschitz norm bounded by $1/r_n^\delta$. Denote by $\Psi_{t,l}$ the indicator function of $M_{t,l}$. By the previous lemma we have 
\begin{align*}
&\left|\mu(Y_0>u_n, M_{t,l}<u_n) - \mu(Y_0>u_n)\mu(M_l<u_n)\right| \\
=& \left|\int 1_{B_{r_n}} \Psi_{[t/2],l}\circ T^{t-[t/2]}\,d\mu - \int 1_{B_{r_n}}\,d\mu \int\Psi_{0,l}\,d\mu \right|\\
\le&  \mathcal{O}(1)\left( \|\Phi\|_\infty \tau_1^{[t/2]} + \|\Phi\|_{Lip} \cdot[t/2]^{-p}\right) + \left|\int\left(\Phi - 1_{B_{r_n}}\right)  \Psi_{[t/2],l}\circ T^{t-[t/2]}\,d\mu\right|\\
&+\left|\int\left(\Phi - 1_{B_{r_n}}\right)\,d\mu \cdot \int \Psi_{[t/2],l}\,d\mu\right|\\
\le& \mathcal{O}(1)\left( \|\Phi\|_\infty \tau_1^{[t/2]} + \|\Phi\|_{Lip} \cdot[t/2]^{-p} \right) + 2\mu(B_{r_n+r_n^\delta } \setminus B_{r_n})\\
\le&\mathcal{O}(1)\left( \tau_1^{[t/2]} + n^{\delta/d_0}\cdot[t/2]^{-p}  \right)+ 2\mu(B_{r_n+r_n^\delta } \setminus B_{r_n}).
\end{align*}

Let $\gamma(n,t)=\tau_1^{[t/2]} + n^{\delta/d_0}\cdot[t/2]^{-p}  + 2\mu(B_{r_n+r_n^\delta } \setminus B_{r_n}) $. Fix some $\gamma \in (0,1) $ and put $t_n = n^\gamma$, we get
$$
n\gamma(n,t_n) \le \mathcal{O}(1)\left(n^{1+\delta/d_0-p\gamma}+ 2n\mu(B_{r_n+r_n^\delta } \setminus B_{r_n}) \right).
$$
Recall that $\mu(B_{r_n}) = \mathcal{O}(1/n)$ by~(\ref{measure_ball}), we get that
$$
n\mu(B_{r_n+r_n^\delta } \setminus B_{r_n}) \to 0\quad \mbox{as}\quad n\to \infty.
$$
In order that $n\gamma(n,t_n)\to 0$  as $n\to \infty$ we need 
\begin{equation}\label{r0}
1+\delta/d_0-p\gamma<0
\end{equation}
which can be achieved by taking $p> 1+\delta/d_0$ and $\gamma$ close to 1.

Hence taking $p>2+4/d_0$ and $2\gamma>d_1$ we get condition $D_2(u_n)$.

Notice that the proof does not require the tower structure.

\subsubsection{Proof of $D'(u_n)$}
Here we prove another version of $D'(u_n)$ which, together with $D_2(u_n)$, implies the extreme value law. To be more precise we will show that for some choice of $\varepsilon>0$, we have
$$
n \cdot \sum_{j=1}^{n^\varepsilon} \mu(Y_0>u_n, Y_j>u_n)\to 0 \quad \mbox{as}\quad n\to\infty.
$$
See Section 2.1 of~\cite{GHN} for more details.

Recall that $\{Y_j>u_n\} = T^{-j}B_{r_n}(z)$, hence
$$
n \cdot \sum_{j=1}^{n^\varepsilon} \mu(Y_0>u_n, Y_j>u_n) = n \cdot \sum_{j=1}^{n^\varepsilon}  \mu(B_{r_n} \cap T^{-j}B_{r_n}).
$$
We split the sum into two parts: \\
Case 1. $\log n < j < n^\varepsilon$.\\
Case 2. $1 < j < \log n$.
%\begin{align*}
%n \cdot \sum_{j=1}^{\sqrt{n}}  \mu(B_{r_n} \cap T^{-j}B_{r_n}) = &n \cdot \sum_{j=n^\varepsilon}^{\sqrt{n}}  \mu(B_{r_n} \cap T^{-j}B_{r_n})\\ &+n \cdot \sum_{j=\log n}^{n^\varepsilon}  \mu(B_{r_n} \cap T^{-j}B_{r_n})\\&+n \cdot \sum_{j=1}^{\log n}  \mu(B_{r_n} \cap T^{-j}B_{r_n}).
%\end{align*}

To deal with Case 1 we follow the estimate in Section 4.7 of~\cite{HW15} and get
\begin{align*}
 \mu(B_{r_n} \cap T^{-j}B_{r_n}) \le C\mu(B_{r_n})\left(\sqrt{j}\cdot\Omega(s) +s\cdot r_n^{d_0} + s\left(\sigma^{-j/s}\right)^{d_0} \right)
\end{align*}
for some $\sigma<1$, where $\Omega(s)$ is defined as
$$
\Omega(s) = \sqrt{\sum_{i:R_i>s}R_i m(\Lambda_i)}.
$$
Notice that 
$$
\mu(B_{r_n}) = \mu(B_{l(g^{-1}(u_n))}(z)) = g^{-1}(u_n) =(1+\varepsilon_n)\frac{e^{-y}}{n}.
$$
Fix some constant $0<a<1$ and put $s=j^a$. By Lemma 4.2 of~\cite{HW15}
\begin{align*}
 \mu(B_{r_n} \cap T^{-j}B_{r_n}) \le C\frac{1}{n}\left( j^{1/2-a(p-1)/2}+j^a n^{-d_0/d_1} + j^a\sigma^{-d_0j^{1-a}}\right).
\end{align*}
Now we sum over $j$ and get 
\begin{align*}
n\sum_{j=\log n}^{n^\varepsilon}\mu(B_{r_n} \cap T^{-j}B_{r_n}) \le& n\sum_{j=\log n}^{n^\varepsilon}C\frac{1}{n}\left( j^{1/2-a(p-1)/2}+j^a n^{-d_0/d_1} + j^a\sigma^{-d_0j^{1-a}}\right)\\
\le& C\left( (\log n)^{3/2-a(p-1)/2} + n^{a\varepsilon+\varepsilon -d_0/d_1}\right).
\end{align*}
Thus we need\\
\begin{equation}\label{r2}
\begin{cases}
3/2-a(p-1)/2<0\\
a\varepsilon+\varepsilon -d_0/d_1<0.
\end{cases}
\end{equation}
Now, $a<1$ requires
$$
\begin{cases}
p>4\\
\varepsilon<\frac{d_0}{2d_1}.
\end{cases}
$$
This proves Case 1.
\vspace{0.1cm}

For Case 2 we put $V_r = \{x: B_r(x) \cap T^{-j}B_r(x) \neq \emptyset \text{ for some } 1\le j < \log n\}$ and
%\todo{label $N_r(j)$ for future references} 
\begin{equation}\label{level_set}
N_r(j) = \{x: B_r(x) \cap T^{-j}B_r(x) \neq \emptyset\};
\end{equation}
then $V_r  = \bigcup_{j=1}^{[\log n]}N_r(j)$. The next lemma is a modification of Proposition 5.1 in~\cite{HW15}.

\begin{lem}\label{main_lemma}
There exist $C>0, \delta>0$ such that 
$$
\mu(V_{r_n}) \le C (\log n)^{-\delta}.
$$
\end{lem}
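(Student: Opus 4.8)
The plan is to bound $\mu(V_{r_n})$ by summing the measures $\mu(N_{r_n}(j))$ over $1 \le j \le [\log n]$ and showing that each term is small. The set $N_r(j)$ is, up to a controlled error, contained in the set of points whose $r$-neighborhood meets its own $j$-th preimage; such points lie within distance $2r$ of the set $\{x : T^j x \in B_{2r}(x)\}$, which is essentially the ``short return'' set at scale $r$ for the iterate $T^j$. The first step is therefore to make this geometric reduction precise: write $N_r(j) \subset \{x : d(x, T^j x) < 2r\}$ (using the triangle inequality and the definition in~(\ref{level_set})), and then cover this set efficiently using the regularity Assumptions~\ref{Annulus} and~\ref{Dimension}, which give $C' \rho^{d_1} < \mu(B_\rho(x)) < C\rho^{d_0}$ for a.e.\ $x$ and control the annuli.

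The second step is to estimate $\mu(N_r(j))$ for a single $j$ in the range $1 \le j < \log n$. This is where I would invoke the tower structure: following Proposition~5.1 of~\cite{HW15}, one partitions according to the return-time data of the Young tower, separates the ``high'' levels (where $R_i > s$ for a threshold $s$ to be optimized, contributing at most $\Omega(s)$-type tail terms) from the ``low'' levels (where the dynamics is uniformly hyperbolic and the return set has measure comparable to $\mu(B_r) \cdot (\text{contraction})^{j}$ plus a curvature/alignment term of size $O(r^{d_0})$). Using~(\ref{radius}), $r_n \asymp n^{-1/d_1}$, so each $\mu(N_{r_n}(j))$ is bounded by a polynomially small power of $n$ times a factor depending on $j$; the key point is that summing over $j < \log n$ costs at most a factor $\log n$ (or a small power thereof), and the resulting bound decays like a negative power of $\log n$ once $p > 8$ is used to control the tail exponent. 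I would optimize the threshold $s = s(j,n)$ exactly as in~\cite{HW15} (e.g.\ $s$ a small power of $j$ or of $\log n$) to balance the tail term against the hyperbolic term.

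The third step is bookkeeping: collect the per-$j$ bounds, sum over $1 \le j \le [\log n]$, and verify that with $p > 8$ (and the constraints on $d_0, d_1, \delta$ already imposed in the $D_2(u_n)$ argument) the total is $\le C(\log n)^{-\delta}$ for some $\delta > 0$. I expect the main obstacle to be the uniformity of the geometric estimate across the short range $j < \log n$: unlike Case~1, here $j$ is too small for the mixing/tail terms alone to do the job, so one genuinely needs the ``no short returns'' input — the statement that the set of points returning close to themselves in few steps has small measure — which is exactly the content of Proposition~5.1 in~\cite{HW15} and requires the tower's hyperbolic product structure together with the dimension assumption to rule out a positive-measure set of points sitting on short periodic-like orbits. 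Adapting that proposition to our normalization $\mu(B_{r_n}) = O(1/n)$ and extracting the explicit $(\log n)^{-\delta}$ rate is the delicate part; the rest is routine optimization of exponents.
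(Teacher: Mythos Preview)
Your outline has the right large-scale architecture (tower decomposition into high and low levels, tail term $\Omega(s)$ versus hyperbolic contraction, summation over $j$), and for $j$ of order $\log n$ it matches the paper's argument for $V_r^2$. The genuine gap is the treatment of \emph{very small} $j$, say $j$ bounded or $j<b\log n$: there your direct estimate cannot work. On the low levels the number $l$ of base returns traversed in $j$ iterates satisfies $l\lesssim j/s$; with $s$ a power of $\log n$ (or of $j$) this is $O(1)$ when $j$ is bounded, so the cylinder diameter bound $\lambda^l$ gives no decay at all, and no choice of $s=s(j,n)$ rescues it. Your appeal to Proposition~5.1 of~\cite{HW15} is circular here, since that proposition is exactly the statement being proved, and your description of its proof omits the device that makes it go through for small $j$.

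The missing idea, which the paper uses, is an amplification step: the inclusion $N_r(j)\subset N_{s_p r}(2^p j)$ with $s_p=2^p\frac{A^{j2^p}-1}{A^j-1}$ (Lemma~B.3 of~\cite{CC}). Iterating a $j$-step near-return produces a $2^p j$-step near-return at an enlarged radius. Choosing $p=p(j)$ so that $2^{p(j)}j\in[b\log n,\,2b\log n]$ pushes every small $j$ into the range where the cylinder argument bites, at the cost of replacing $r_n$ by $r'\le A^{b\log n}r_n\le n^{b\log A-1/d_1}$. Taking $b<1/(d_1\log A)$ keeps $r'$ polynomially small in $n$, so the $V_r^2$ estimate applies verbatim with $r'$ in place of $r_n$. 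This two-range split ($V_r^1$ via amplification reduced to $V_r^2$) is the heart of the proof; once you insert it, the rest of your bookkeeping is correct and yields $\mu(V_{r_n})\le C(\log n)^{-(p-8)/4}$.
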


Before proving te lemma we introduce some notations. We denote the base of the tower by $\Lambda = \bigcup_{j=0}^\infty \Lambda_j$ with $R|\Lambda_j = R_j$. Let $\mathcal{P} = \{\Lambda_j\}$ be the partition on the base. Define $\hat{T}: \Lambda \to \Lambda$ as 
$$
\hat{T}x = T^{R_j}x \text{\hspace{2mm} for all } x \in \Lambda_j .
$$
Let $m$ be the SRB measure on $\Lambda$ the $\mu$ the measure on the whole tower with the form
$$
\mu(A) = \sum_{i=0}^{\infty}\sum_{n=0}^{R_i-1} m(T^{-n}A\cap\Lambda_i).
$$

As in~\cite{HW15}, given an index $(i_0, \ldots, i_{n-1})$ we define the $n$-cylinder $\ncy$ by
$$
\ncy = \Lambda_{i_0}\cap \hat{T}^{-1}\Lambda_{i_1} \cap \cdots \cap \hat{T}^{-(n-1)}\Lambda_{i_{n-1}}.
$$
Sometimes we also write $\tau=(i_0, \ldots, i_{n-1})$ and $\xi_\tau$ instead of $\ncy$.
For a given $s>0$ we define 
$$
\ncys = \begin{cases}
\ncy &\mbox{if } R_{i_j}\le s \mbox{ for all } 0 \le j\le n-1\\
\emptyset &\mbox{otherwise.}
\end{cases}
$$
In other words, $\{\ncys\}$ are cylinders that only visit lower levels of the tower. Also put 
$$
\hat{\Lambda}_i = \bigcup_{\ncys \subset \Lambda_i} \ncys,
$$
then $\Lambda_i \setminus \hat{\Lambda}_i$ are the points in $\Lambda_i$ that visit higher levels of the tower at some iterates. We also put $A = \sup\{\|DT\|\}$.

In what follows we will often omit the index of $r_n$ and simply write $r$.
\begin{proof}
The idea is similar to Section 5 of~\cite{HW15}. We fix some $b>0$ and set $s=(\log n)^{1/4}$. Split $V_r$ into two parts:
$$
V^1_r =  \bigcup_{j=1}^{[b\log n]-1}N_r(j), \hspace{5mm}
V^2_r =  \bigcup_{j=[b\log n]}^{[\log n]}N_r(j),
$$
where $N_r(j)$ is given by (\ref{level_set}).
We estimate $V^2_r$ first. 
\begin{align*}
\mu(N^r(j)) =& \sum_{i=1}^{\infty}\sum_{m=0}^{R_i-1} m(T^{-m}N_r(j)\cap\Lambda_i)\\
=&  \sum_{i=1}^{\infty}\sum_{m=0}^{R_i-1} m(T^{-m}N_r(j)\cap\tilde{\Lambda}_i) +  \sum_{i=1}^{\infty}\sum_{m=0}^{R_i-1} m(T^{-m}N_r(j)\cap\Lambda\setminus\tilde{\Lambda}_i).
\end{align*}

The second term is estimated by the tail of the tower and by Lemma 4.7 in \cite{HW15}:
\begin{align*}
\sum_{i=1}^{\infty}\sum_{m=0}^{R_i-1} m(T^{-m}N_r(j)\cap\Lambda\setminus\tilde{\Lambda}_i)
\le&\sum_{i=1}^{\infty}\sum_{m=0}^{R_i-1} m(\Lambda\setminus\tilde{\Lambda}_i)\\
\le& \sum_{i=1}^{\infty}R_i m(\Lambda\setminus\tilde{\Lambda}_i)\\
\le& Cj (\log n)^{-p/4}\\
\le& C (\log n)^{-\frac{p-4}{4}}.
\end{align*}

To deal with the first term, we choose indices $\tau = (i_0, \ldots, i_{l})$ such that $R^{(l)}$ defined as $R^{(l)} = \sum_{k=0}^{l-1}R_{i_k}$ satisfies
$$
R^{(l)} \le j+m \le R^{(l+1)}.
$$
Since we are in $\tilde{\Lambda}_i$, all $R_{i_k}$ are less than $s = (\log n)^{1/4}$.
Thus
$$
m(T^{-m}N_r(j)\cap\tilde{\Lambda}_i) = \sum_{\xi_\tau \subset \Lambda_i} m(T^{-m}N_r(j)\cap\xi_\tau).
$$
Now we use distortion to get
\begin{align*}
m(T^{-m}N_r(j)\cap\xi_\tau) =& \frac{m(T^{-m}N_r(j)\cap\xi_\tau)}{m(\xi_\tau)}m(\xi_\tau)\\
\le& C \frac{m(\hat{T}^{l+1}T^{-m}N_r(j)\cap\xi_\tau)}{m(\hat{T}^{l+1}\xi_\tau)}m(\xi_\tau)\\
\le& Cm(\hat{T}^{l+1}T^{-m}N_r(j)\cap\xi_\tau)m(\xi_\tau).
\end{align*}
Next we estimate the measure of $\hat{T}^{l+1}T^{-m}N_r(j)\cap\xi_\tau$ by its diameter. 
Set $B = R^{(l+1)} - j-m$, then  $B<(\log n)^{1/4}$ and 
$$
\hat{T}^{l+1} = T^{R^{(l+1)}} = T^{j+m+B}.
$$
We take points $x,y\in N_r(j)$ such that $T^{-m}x, T^{-m}y \in T^{-m}N_r(j)\cap\xi_\tau$. We have $d(T^jx,x)<r, d(T^jy,y)<r$ and thus
\begin{align*}
d(\hat{T}^{l+1}T^{-m}x,\hat{T}^{l+1}T^{-m}y) =& d(T^{j+B}x,T^{j+B}y)\\
\le& A^B d(T^jx, T^jy)\\
\le& A^B\left(d(T^jx,x) + d(x,y) + d(T^jy,y)\right)\\
\le& A^B(2r+ A^md(T^{-m}x,T^{-m}y)).
\end{align*}
Since $T^{-m}x, T^{-m}y \in \xi_\tau$ we get $d(T^{-m}x,T^{-m}y) \le \diam(\xi_\tau) < \lambda^l$.

To find a lower bound for  $l$, recall that $j+m \le R^{(l+1)}$ and $R_{i_k}< (\log n)^{1/4}$, we get
$$
j\le R^{(l+1)} \le (l+1) (\log n)^{1/4},
$$
so 
$$
l > j (\log n)^{1/4} > b (\log n)^{3/4}.
$$
As a result,
\begin{align*}
d(\hat{T}^{l+1}T^{-m}x,\hat{T}^{l+1}T^{-m}y) 
\le& A^B2r+ A^{B+m} \lambda^{b (\log n)^{3/4}}\\
\le& CA^{(\log n)^{1/4}} n^{-1/d_1}+ CA^{2(\log n)^{1/4}} \lambda^{b (\log n)^{3/4}}\\
\le& Ce^{-C'(\log n)^{3/4}}.
\end{align*}
Together with Assumption~\ref{Dimension} we get
$$
m(T^{-m}N_r(j)\cap\xi_\tau) \le Ce^{-C'd_0(\log n)^{3/4}} m(\xi_\tau).
$$
Collecting all $\xi_\tau$ and summing over $m$ gives
\begin{align*}
 \sum_{i=1}^{\infty}\sum_{m=0}^{R_i-1} m(T^{-m}N_r(j)\cap\tilde{\Lambda}_i)
 =& \sum_{i=1}^{\infty}\sum_{m=0}^{R_i-1}\sum_{\xi_\tau \subset \Lambda_i} m(T^{-m}N_r(j)\cap\xi_\tau)\\
 \le& \sum_{i=1}^{\infty}\sum_{m=0}^{R_i-1}\sum_{\xi_\tau \subset \Lambda_i}Ce^{-C'd_0(\log n)^{3/4}} m(\xi_\tau)\\
 \le& Ce^{-C'd_0(\log n)^{3/4}} .
\end{align*}
The overall estimate for $V_r^2$ is
\begin{align*}
\mu(V_r^2) =&\sum_{j=[b\log n]}^{[\log n]}\mu(N_r(j))\\
\le& \sum_{j=[b\log n]}^{[\log n]} Ce^{-C'd_0(\log n)^{3/4}} +C'' (\log n)^{-\frac{p-4}{4}}\\
\le& C(\log n)^{-\frac{p-8}{4}}.
\end{align*}

For $V_r^1$, we use Lemma B.3 in~\cite{CC} and put
$$
s_p = 2^p\frac{A^{j2^p}-1}{A^j-1}.
$$ 
Then $N_r(j) \subset N_{s_pr}(2^pj)$ for any $p>1$. Let $p(j) = [\log_2(b\log n) - \log_2j]+1$, we get
$$
\bigcup_{j=1}^{[b\log n]}N_r(j) \subset \bigcup_{j=1}^{[b\log n]} N_{s_{p(j)}r}(2^{p(j)}j) \subset \bigcup_{j' = [b\log n]}^{[2b\log n]}N_{r'}(j')
$$
with $r' = s_p(j)r$ and $j' = 2^{p(j)}j$. Notice that 
$$
r' = s_{p(j)}r \le b\log n \cdot A^{b\log n} r \le n^{b\log A-1/d_1}.
$$
The same estimate as before gives
\begin{align*}
\mu(N_{r'}(j'))\le&\left( A^B2r'+ A^{B+m} \lambda^{b (\log n)^{3/4}}\right)^{d_0} + C (\log n)^{-\frac{p-4}{4}}\\
\le& C''e^{(\log n)^{1/4}}n^{b\log A-1/d_1} +C' e^{c (\log n)^{3/4}}+C (\log n)^{-\frac{p-4}{4}}\\
\le& C (\log n)^{-\frac{p-4}{4}}
\end{align*}
if $b\log A-1/d_1<0$, i.e., $b < 1/(d_1 \log A)$. As a result 
$$
\mu(V_r^1) \le \sum_{j' = [b\log n]}^{[2b\log n]} \mu(N_{r'}(j')) \le C (\log n)^{-\frac{p-8}{4}}.
$$
Therefore we get 
$$
\mu(V_r) \le \mu(V_r^1) +\mu(V_r^2) \le  C (\log n)^{-\delta}
$$
with $\delta = \frac{p-8}{4}$.
This finishes the proof of Lemma \ref{main_lemma}.
\end{proof}

To finish the proof of Theorem~\ref{thm_evl_tower} we use the Maximal function technique by Collet~\cite{C01}. See also~\cite{GHN}.
 This allows us to carry Lemma~\ref{main_lemma} over to neighborhoods of generic points.

Fix some $0<\zeta<1, \rho>0$, we define the set 
$$
F_k = \left\{\mu(B_{r_{\exp(k^\zeta)}} \cap V_{r_{\exp(k^\zeta)}}) \ge \mu(B_{r_{\exp(k^\zeta)}}) \cdot k^{-\rho}\right\}.
$$
$x \in F_k$ means that 
$$
\frac{\mu(B_{r_{\exp(k^\zeta)}} \cap V_{r_{\exp(k^\zeta)}})}{ \mu(B_{r_{\exp(k^\zeta)}}) } \ge k^{-\rho}
$$
i.e., the portion of points of $V_{r_{\exp(k^\zeta)}}$ in the $r_{\exp(k^\zeta)}$-neighborhood at $x$ is at least $ k^{-\rho}$.
Next we define the maximal function as 
$$
M_r(x) = \sup_{s>0}\frac{1}{\mu(B_s(x))}\int_{B_s(x)}1_{V_r}(y)\,d\mu(y).
$$
We have
$$
F_k \subset \{M_{r_{\exp(k^\zeta)}} \ge k^{-\rho}\}.
$$
By the theorem of Hardy and Littlewood (see e.g.\ Theorem 2.19 of~\cite{M95}) we get 
\begin{align*}
\mu(F_k) \le& \mu(M_{r_{\exp(k^\zeta)}} \ge k^{-\rho}) \le \frac{\mu(1_{V_{r_{\exp(k^\zeta)}}(y)})}{k^{-\rho}}\\
\le& k^{-(\zeta\delta -\rho)}.
\end{align*}
If $\zeta\delta -\rho>1$ we get $\sum_k\mu(F_k) < \infty$, thus there exist $N(x)$ for almost every $x$ such that $x \notin F_k$ for all $k>N(x)$. For every $n$, recall that $C/n \le r_n \le C'/n$. We choose $k$ such that 
$$
\exp(k^\zeta)\le n < \exp((k+1)^\zeta),
$$
we have $r_{\exp((k+1)^\zeta)}\le r_n \le r_{\exp(k^\zeta)}$. As a result
\begin{align*}
B_{r_n}\cap T^{-j}B_{r_n} \subset& B_{r_{\exp(k^\zeta)}}\cap T^{-j} B_{r_{\exp(k^\zeta)}}\\
\subset& B_{r_{\exp(k^\zeta)}}\cap V_{r_{\exp(k^\zeta)}}
\end{align*}
for every $j < \log n$. Therefore
\begin{align*} 
n \cdot \sum_{j=1}^{\log n}  \mu(B_{r_n} \cap T^{-j}B_{r_n}) \le& n \cdot \sum_{j=1}^{\log n} \mu( B_{r_{\exp(k^\zeta)}}\cap V_{r_{\exp(k^\zeta)}})\\
\le & \exp((k+1)^\zeta) \cdot (k+1)^\zeta \mu(B_{r_{\exp(k^\zeta)}}) \cdot k^{-\rho}\\
\le & C\, \frac{ \exp((k+1)^\zeta)}{\exp(k^\zeta)}\cdot k^{\zeta -\rho} \to 0,
\end{align*}
provided that $\zeta - \rho <0$. To summarize we need 
$$
\begin{cases}
0<\zeta<\rho\\
\zeta<1\\
\zeta\delta -\rho>1.
\end{cases}
$$
Such parameters exist if $\delta = \frac{p-8}{4}>2$, i.e.\ $p>16$. All together finishes the proof of Theorem~\ref{thm_evl_tower}.
%\todo{optimize $p$}
%%%%%%%%%%%%%%%%%%%%%%%%%%%%%%%%%%%%%%%%%%%%%%%%%%%%%%%%%
%%%%%%%%%%%%%%%%%%%%%%%%%%%%%%%%%%%%%%%%%%%%%%%%%%%%%%%%% example
\section{Examples}
\subsection{Lorenz attractors}
The Lorenz flow \cite{Lo63} is one of the key examples in the theory of dynamical systems due to the chaotic nature of its dynamics, its robustness and its connection with hydrodynamical systems. The Lorenz attractor, a {\em{strange attractor}} with a characteristic butterfly shape, has extremely rich dynamical properties which have been studied from a variety of viewpoints:
topological, geometric and statistical, see \cite{ArPa10}. Part of the reason for the richness of the Lorenz flow is the fact that it has an equilibrium, i.e., a fixed point, which is accumulated by regular orbits (orbits through points where the corresponding vector field does not vanish) which prevents the flow from being uniformly hyperbolic. Indeed it is one of the motivating examples in the study of non-uniformly hyperbolic dynamical systems \cite{MPP99}. It is also robust in the sense that nearby flows also possess strange attractors with similar properties. The Lorenz equations can be studied using geometric models of the Lorenz flow, see \cite{ABS, GW79}. 

The classical geometric Lorenz flow is expanding. This corresponds to the Lyapunov exponents at the origin, $\lambda_s$ and $\lambda_u$, the stable and unstable exponents respectively, having $\lambda_u + \lambda_s > 0$.  The geometric Lorenz attractor has a global cross section $\Sigma \subset R^3$ , and a first return map $F$ defined on $\Sigma \setminus \Gamma$, where $\Gamma$ is a one dimensional line contained in the stable manifold at the origin, that preserves a one-dimensional foliation which is contracted under the action of $F$. It is possible to study the dynamics of a geometric Lorenz attractor flow through a $1$-dimensional map $f$ obtained from the quotient  though the leaves of this contracting foliation. We refer to \cite{galapacif09} for a didactic exposition of this construction.

%It is shown in \cite{Vi97} that $f$ has an absolutely continuous invariant measure $\mu$ whose density lies in BV, the space of functions of bounded variation. Moreover from \cite{K06}, the system $(f,X,\mu)$ can be modelled by a Young tower with exponential tail. %Denoting the base of the tower by $\Lambda \subset X$, we let $\mathcal{P}_0 = \{\Lambda_l \}$ denote the canonical partition of $\Lambda$ into subsets $\Lambda_l$ with $R | \Lambda_l = R_l$, and $f^{R_l} \Lambda_l = \Lambda$. We let $\mathcal{P} = \{\Lambda_{l,j}\}$ denote the partition on the tower, where $\Lambda_{l,j}$ is identified with $f^j(\Lambda_l), (j < R_l)$. We assume there exists $\theta < 1$ such that $\mu(\{R > n\}) = \mathcal{O}(\theta^n)$.
%Gupta et al.\ \cite{GHN} showed that $f$ satisfies the extreme value law

It is shown in~\cite{APPV} that $F$ has a SRB measure $\mu$ whose conditional measures on the unstable manifolds are absolutely continuous w.r.t.\ Lebesgue with bounded density. In \cite{galapacif09} it is shown that $\mu$ has exponential decay of correlation with Lipschitz functions. To show that $F$ has exponentially distributed first return time we need the following elementary result, which is similar to Assumption~\ref{Annulus}:

\begin{prop}\label{regularity_lorenz}
There exist $C>0$ such that 
$$
\mu(B_{r+\epsilon}(x)\setminus B_r(x)) \le C(r\epsilon)^{1/2}
$$
for every $0<\epsilon \ll r$.
\end{prop}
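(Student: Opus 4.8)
The plan is to disintegrate the SRB measure $\mu$ along the measurable partition of $\Sigma$ into local unstable manifolds and reduce the statement to a deterministic length estimate on a single unstable curve. Recall from \cite{APPV} that $\mu=\int\mu_W\,d\hat\mu(W)$ over the local unstable curves $W$, where $\hat\mu$ is a probability measure on the quotient and each conditional $\mu_W$ is absolutely continuous with respect to arclength on $W$ with density bounded by a uniform constant $K$. Hence
$$
\mu\big(B_{r+\epsilon}(x)\setminus B_r(x)\big)\le K\sup_{W}\mathcal{H}^1\big(W\cap(B_{r+\epsilon}(x)\setminus B_r(x))\big),
$$
and it suffices to bound this supremum by $C(r\epsilon)^{1/2}$, uniformly in $x\in\Sigma$ and in $W$, for $r$ small enough (which is all that is needed, cf.\ Assumption~\ref{Annulus}).

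For the geometric bound I would use that the unstable leaves of the geometric Lorenz return map are $C^{1+\mathrm{Lip}}$ curves with uniformly bounded curvature $\kappa_0$: they are graphs over the expanding direction with uniformly bounded slope, and although $f'$ (hence naively $f''$) blows up along the singular line $\Gamma$, the slope of each leaf flattens there at a compensating rate, keeping the curvature bounded. Fix such a $W$ and a point $x$. For $r$ small the curvature bound forces $W\cap B_{r+\epsilon}(x)$ to be a single arc, which I parametrize by arclength $t$ and along which I set $\phi(t)=d_\Sigma(W(t),x)^2$. Then $\phi'(t)=2\langle W(t)-x,W'(t)\rangle$ and
$$
\phi''(t)=2|W'(t)|^2+2\langle W(t)-x,W''(t)\rangle\ge 2-2\kappa_0\,d_\Sigma(W(t),x)\ge 1
$$
as soon as $(r+\epsilon)\kappa_0\le 1/2$. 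Thus $\phi$ is uniformly convex on the arc; if $t_0$ denotes its interior minimizer, the Taylor estimate $\phi(t)\ge\phi(t_0)+\tfrac12(t-t_0)^2$ shows that on each side of $t_0$ the set $\{t:r^2\le\phi(t)\le(r+\epsilon)^2\}$ is an interval of length at most $\sqrt{2\big((r+\epsilon)^2-r^2\big)}\le\sqrt{6r\epsilon}$. Hence $\mathcal{H}^1\big(W\cap(B_{r+\epsilon}(x)\setminus B_r(x))\big)\le 2\sqrt{6r\epsilon}$, and together with the first paragraph this yields the Proposition.

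The hard part is precisely the uniform curvature bound on the unstable leaves near $\Gamma$ and its (dense set of) preimages, where the expansion of $f$ is unbounded; one must verify, using the fine structure of the return map from \cite{APPV} and \cite{galapacif09}, that the vanishing of the leaves' slopes there genuinely absorbs the would-be blow-up so that $\kappa_0$ is leaf-independent. If this control is available only in the weaker form that the leaves are uniformly $C^{1+\alpha}$, the same scheme still applies, with a weaker conclusion: a curve whose tangent direction is $\alpha$-H\"older with constant $M$ can osculate the circle of radius $r$ only along an arc of length $\lesssim(Mr)^{1/(1-\alpha)}$, giving $\mu\big(B_{r+\epsilon}(x)\setminus B_r(x)\big)\le C\big(r^{1/(1-\alpha)}+(r\epsilon)^{1/2}\big)$, which is still enough to verify Assumption~\ref{Annulus} when $\alpha$ is not too small relative to the pointwise dimension of $\mu$. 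The reduction in the first paragraph is routine given the cited disintegration, once one checks that $\hat\mu$ is a probability and that $K$ is genuinely uniform over leaves.
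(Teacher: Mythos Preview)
Your approach is the same as the paper's: disintegrate $\mu$ along unstable leaves and bound the arclength of each leaf's intersection with the annulus $B_{r+\epsilon}\setminus B_r$. The paper simply asserts that ``easy calculation shows'' each leaf meets the annulus in at most two arcs of length $\le c(r\epsilon)^{1/2}$ and then multiplies by the uniform density bound; you supply that calculation via the convexity of $\phi(t)=d(W(t),x)^2$, which is correct and is exactly the kind of argument the paper leaves implicit.

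Your discussion of the curvature bound near $\Gamma$ is more careful than the paper, which does not address the point at all. In the standard geometric Lorenz model the unstable leaves on the cross section are indeed uniformly $C^{1+\mathrm{Lip}}$ graphs over the expanding direction (in the simplest versions they are in fact horizontal segments), so a uniform $\kappa_0$ is available and your main argument goes through; your $C^{1+\alpha}$ fallback is a reasonable remark but not needed here. The only other thing to check, which you flag, is that the transverse measure $\hat\mu$ is finite so that $\sup_W$ can replace the integral over leaves---this is immediate since $\mu$ is a probability.
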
 
\begin{proof}
Let $\mu_{\gamma_u}$ be the conditional measure of $\mu$ on the unstable leaf $\gamma_u$. Denote by $\nu$ the transversal measure and $A_{r,\epsilon} = B_{r+\epsilon}(x)\setminus B_r(x)$. We get
$$
\mu(A_{r,\epsilon}) = \int \int_{\gamma_u} \mathbbm{1}_{A_{r,\epsilon}\cap\gamma_u}\,d\mu_{\gamma_u}d\nu(\gamma_u)
$$
Easy calculation shows that for each $\gamma_u$, $A_{r,\epsilon}\cap\gamma_u$ consists of at most two segments, each of which has length bounded by $c(r\epsilon)^{1/2}$ for some constant $c$. Assume that the densities of $\mu_{\gamma_u}$ are bounded by $D$ for all $\gamma_u$, the result follows with $C = c\cdot D.$
\end{proof}

\begin{remark}
Proposition~\ref{regularity_lorenz} implies Assumption~\ref{Annulus} by taking $\epsilon = r^\delta$ with $\delta> 2d_H-1$, where $d_H$ is the Hausdorff dimension of $\mu$.
\end{remark}

Next we use Theorem 2.1 of~\cite{HSV} to conclude that the first hitting time distribution of $F$ is exponential. The proof involves approximating the indicator function of $B_r$ by a Lipschitz function as in the proof of condition $D_2$  and a Lebesgue density point argument. For more details see \cite{RSV}.

It then follows from Theorem~\ref{thm_hitting} that the first hitting time distribution of the Lorenz flow is exponential. By Theorem~\ref{thm_evl_flow} we conclude that the classical Lorenz flow satisfies the extreme value law.
%%%%%%%%%%%%%%%%%%%%%%%%%%%%%%%%%%%%%%%%%%%%%%%%%%%%%%%%%%%%%%%%%%%%%%%%%%%%%%%%%%%%%%%%%%%%%%%%%%%%%%%%%%%

\subsection{Suspension flow over $C^2$ diffeomorphisms }

We consider  a dynamical system on a compact manifold $M$ where $f :
M \to M$ is a $C^2$ diffeomorphism with an attractor $A$. Suppose the system can be modeled by
a Young tower whose return time function decays polynomially with degree $p > 16$. Take an integrable function $r:m \to \mathbb{R}^+$ and let $X$ be the suspension flow over $f$ with roof function $r$.

Let $\mu$ be the SRB measure associated with the tower. Assumption~\ref{Dimension} is satisfied by taking $d_1>d>d_0$ where $d$ is the Hausdorff dimension of $\mu$. Assumption~\ref{Annulus} can be deduced from Proposition 6.1 in~\cite{HW15}. It follows from Theorem~\ref{thm_evl_tower} that $f$ satisfies the extreme value law. By Theorem 2.6 in~\cite{HNT} the flow $X$ also has the same extreme value distribution.

On the other hand, Theorem~\ref{thm_hitting} together with Theorem 3 in~\cite{HW15} shows that $X$ has exponentially distributed first hitting time. Applying Theorem~\ref{thm_evl_flow} we get again extreme value law for $X$.

%\subsection{Rovella attractors}
%The geometric construction of the Rovella attractor is much the same as the
%geometric Lorenz attractor. The only differences are:

%\begin{enumerate}
%\item the condition on the eigenvalues $\lambda_1 + \lambda_3 > 0$ above is
%replaced by $\lambda_1 + \lambda_3 < 0$,

%\item \label{it:AssRovella2} Rovella imposes the assumption $r> s+3$, where $%
%r=-\frac{\lambda_2}{\lambda_1}$ and $s=-\frac{\lambda_3}{\lambda_1}>1$.
%\end{enumerate}

%The second condition guarantees the existence of a $C^3$
%uniformly contracting stable foliation for the Poincar\'e map associated to
%the flow. On its turn, this regularity of the stable foliation induce nice
%properties on the $1$-dimensional map obtained through the projection along
%the foliation.

%%%%%%%%%%%%%%%%%%%%%%%%%%%%%%%%%%%%%%%%%%%%%%%%%%%%%%%%%%%%%%%%%%%%%%%%%%%%%%%%%%%%%%%%%%%%%%%%%%%%%%%%%%%

%
%\bibliographystyle{plain}
%\bibliography{email_output} 
%%%%%%%%%%%%%%%%%%%%%%%%%%%%%%%%%%%%%%%%%%%%%%%%%%%%%%%%%
%%%%%%%%%%%%%%%%%%%%%%%%%%%%%%%%%%%%%%%%%%%%%%%%%%%%%%%%% Bibliography

\vspace{1cm}
\noindent
{\em  M. J. Pacifico and F. Yang}:
Instituto de Matem\'atica,
Universidade Federal do Rio de Janeiro,
C. P. 68.530, CEP 21.945-970,
Rio de Janeiro, RJ, Brazil.
\vspace{0.2cm}

E-mail: pacifico@im.ufrj.br $\&$ mjpacifico@gmail.com

E-mail: fyang@im.ufrj.br $\&$ fizbanyang@gmail.com

\end{document}